\documentclass{amsart}
\usepackage{amsfonts,amssymb,mathrsfs,amsthm}

\usepackage{graphicx}
\usepackage[all]{xy}

\newtheorem{thm}{Theorem}[section]

\newtheorem{lem}[thm]{Lemma}

\newtheorem{case}{Case}

\theoremstyle{definition}

\theoremstyle{remark}
\newtheorem*{rmk}{Remark}

\begin{document}

\title[A generalization of Menon's identity with Dirichlet characters]{A generalization of Menon's identity with Dirichlet characters}

\author{Yan Li}

\address{Department of Applied Mathematics, China Agricultural
university, Beijing 100083, China} \email{liyan\_00@cau.edu.cn,\ liyan\_00@mails.tsinghua.edu.cn}

\author{Xiaoyu Hu}
\address{Department of Applied Mathematics, China Agricultural
university, Beijing 100083, China}\email{hxyyzptx@126.com}

\author{Daeyeoul Kim*}

\address{Department of Mathematics and Institute of Pure and Applied Mathematics \\
 Chonbuk National University  \\
  567 Baekje-daero, Deokjin-gu, Jeonju-si, Jeollabuk-do 54896\\
South Korea} \email{kdaeyeoul@jbnu.ac.kr}

 \thanks{*Corresponding author}
\subjclass[2010]{11A07, 11A25}
 \keywords{Menon's identity, greatest common divisor, Dirichlet character,
  divisor function,  Euler's totient function, congruence.%, Cauchy-Frobenius-Burnside lemma, unipotent group, Heisenberg group
 }

\begin{abstract}
The classical Menon's identity \cite{Menon} states that
 \begin{equation*}\label{oldbegin1}
\sum_{\substack{a\in\Bbb Z_n^\ast
}}\gcd(a -1,n)=\varphi(n) \sigma_{0} (n),
\end{equation*}
where for a positive integer $n$, $\Bbb Z_n^\ast$ is the group of units of the ring $\Bbb Z_n=\Bbb Z/n\Bbb Z$, $\gcd(\ ,\ )$ represents the greatest common divisor, $\varphi(n)$
is the Euler's totient function and $\sigma_{k} (n) =\sum_{d|n } d^{k}$ is the divisor function.
 In this paper, we generalize Menon's identity with Dirichlet characters in the following way:
\begin{equation*}
 \sum_{\substack{a\in\Bbb Z_n^\ast \\ b_1, ..., b_k\in\Bbb Z_n}}
 \gcd(a-1,b_1, ..., b_k, n)\chi(a)=\varphi(n)\sigma_k\left(\frac{n}{d}\right),
\end{equation*}
where $k$ is a non-negative integer
%$n$ is a positive integer, 
%$\sigma_{k} (n) =\sum_{d|n } d^{k}$ is the divisor function 
and $\chi$ is a Dirichlet character modulo $n$ whose conductor is $d$.
 Our result can be viewed as an extension of Zhao and Cao's result \cite{Z-C} to $k>0$. 
 It can also be viewed as an extension of Sury's result \cite{Sury} to Dirichlet characters.

%Recently,  Zhao and Cao \cite{Z-C} extend Menon's identity to Dirichlet characters:
 %\begin{equation*}
%\sum_{\substack{a=1  \\ \gcd(a ,n )=1
%}}^n
 %\gcd(a-1,  n)\chi(a)=\varphi(n) \tau\left(\frac{n}{d}\right),
%\end{equation*}
%where $\chi$ is a Dirichlet character modulo $n$ and
%$d$ is the conductor of $\chi$. In this paper, we extend Menon's identity to additive characters. A special case of our main result reads like this:
%\begin{equation*}\sum_{\substack{a =1 \\ \gcd(a ,n)=1
%}}^n  \gcd (a-1, n)
%  \exp\left(\frac{k a 2 \pi i}{n} \right)=\exp \left(\frac{k 2 \pi i}{n}
 %\right) \tau(\gcd(k,n)) \varphi(n)\end{equation*}
 %if  ${\rm ord}_p (n) -{\rm ord}_p (k) \neq 1$ holds
 %for any prime $p$ dividing $n$, where for $u\in \Bbb Z$, ${\rm ord}_p(u)$ is the exponent of the highest power
 %of $p$ dividing $u$. For $k=0$, our result reduces to the classical Menon's identity. Ramanujan sum is used in our proof.
%In this article,
% we consider the actions of subgroups of the general linear group $GL_r (\Bbb Z_n )$ on $\Bbb Z_n^r$, including groups of upper triangular matrices in $GL_r (\Bbb Z_n )$, unipotent groups, Heisenberg groups and extended Heisenberg groups.
%By applying the Cauchy-Frobenius-Burnside lemma, we obtain several generalizations of the well known Menon's identity.
\end{abstract}

\maketitle

\section{Introduction}
There is a beautiful identity due to P. K. Menon \cite{Menon}, which states that, for
any positive integer $n$, we have
 \begin{equation}\label{oldbegin1}
\sum_{\substack{a\in\Bbb Z_n^\ast
}}\gcd(a -1,n)=\varphi(n) \sigma_{0} (n),
\end{equation}
where $\Bbb Z_n^\ast$ is the group of units of the ring $\Bbb Z_n=\Bbb Z/n\Bbb Z$, $\gcd(\ ,\ )$ represents the greatest common divisor,  $\varphi$
is the Euler's totient function and $\sigma_{k} (n) =\sum_{d|n } d^{k}$ is the divisor function.

%Many authors gave various generalizations of Menon's identity in different directions.
In 2009, B. Sury \cite{Sury} obtained the following Menon-type identity
 \begin{equation}\label{oldbegin2}
  \sum_{\substack{a\in\Bbb Z_n^\ast \\ b_1, ..., b_k\in\Bbb Z_n}}
 \gcd(a-1,b_1, ..., b_k, n)=\varphi(n)\sigma_k(n)
\end{equation}
 % where
%$\sigma_{k} (n) =\sum_{d|n } d^{k}$
 by using Cauchy-Frobenius-Burnside lemma.
 Miguel \cite{Mig1}, \cite{Mig2}
 extended identities \eqref{oldbegin1} and \eqref{oldbegin2} from
 $\Bbb Z$ to any residually finite Dedekind domain.
  %T\'{o}th \cite{To1} generalized Menon's identity by considering
  %arithmetical sums representing functions of several variables.

 Recently,  Zhao and Cao \cite{Z-C} derived the following elegant Menon-type identity with Dirichlet characters
 \begin{equation}\label{char1}
\sum_{\substack{a\in\Bbb Z_n^\ast
}}
 \gcd(a-1,  n)\chi(a)=\varphi(n) \sigma_{0} \left(\frac{n}{d}\right),
\end{equation}
where $\chi$ is a Dirichlet character modulo $n$ and
$d$ is the conductor of $\chi$.

T\'{o}th \cite{To3} generalized the identity \eqref{char1} from gcd functions to even functions $({\rm mod}\ n)$. 
In \cite{LK2}, Li and Kim obtained another Menon-type identity by replacing Dirichlet characters of $\Bbb Z_n^\ast$  in \eqref{char1} with additive characters of $\Bbb Z_n$.
For other generalizations of Menon's identity, see \cite{H1}, \cite{HW2},
\cite{HW3}, \cite{LK}, 
\cite{LK1}, 
\cite{Tar} and \cite{To1}.

Denote
\begin{equation}\label{begin1}
 S_\chi(n,k)=\sum_{\substack{a\in\Bbb Z_n^\ast \\ b_1, ..., b_k\in\Bbb Z_n}}
 \gcd(a-1,b_1, ..., b_k, n)\chi(a).
\end{equation}
In this article, we will explicitly compute $S_\chi(n,k)$.

Our main result is the following theorem.
\begin{thm}\label{thm1}
Let $n$ be a positive integer and $\chi$ be a Dirichlet character modulo $n$ whose conductor is $d$. 
Assume $k$ is a non-negative integer. 
Then, we have the following identity:
\begin{equation}\label{begin2}
 \sum_{\substack{a\in\Bbb Z_n^\ast \\ b_1, ..., b_k\in\Bbb Z_n}}
 \gcd(a-1,b_1, ..., b_k, n)\chi(a)=\varphi(n)\sigma_k\left(\frac{n}{d}\right).
\end{equation}
\end{thm}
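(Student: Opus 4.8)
The plan is to linearize the gcd and thereby reduce the whole multiple sum to a one-dimensional character sum over a subgroup of $\Bbb Z_n^\ast$. First I would apply the elementary identity $m=\sum_{e\mid m}\varphi(e)$ to $m=\gcd(a-1,b_1,\dots,b_k,n)$, which rewrites the gcd as $\sum_e\varphi(e)$ with $e$ running over those divisors of $n$ that simultaneously divide $a-1$ and every $b_i$. Substituting this into the definition \eqref{begin1} and interchanging the order of summation gives
\begin{equation*}
S_\chi(n,k)=\sum_{e\mid n}\varphi(e)\Bigl(\sum_{\substack{b_1,\dots,b_k\in\Bbb Z_n\\ e\mid b_i}}1\Bigr)\Bigl(\sum_{\substack{a\in\Bbb Z_n^\ast\\ a\equiv 1\,(\mathrm{mod}\,e)}}\chi(a)\Bigr).
\end{equation*}
The inner sum over the $b_i$ is immediate: for each index there are exactly $n/e$ residues in $\Bbb Z_n$ divisible by $e$, so that factor equals $(n/e)^k$.

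The crux is the character sum $T_\chi(e)=\sum_{a\in\Bbb Z_n^\ast,\ a\equiv 1\,(\mathrm{mod}\,e)}\chi(a)$. I would observe that the set $K_e=\{a\in\Bbb Z_n^\ast:\ a\equiv 1\,(\mathrm{mod}\,e)\}$ is precisely the kernel of the natural surjection $\Bbb Z_n^\ast\to\Bbb Z_e^\ast$, hence a subgroup of order $\varphi(n)/\varphi(e)$, and $\chi|_{K_e}$ is a character of this subgroup. By orthogonality of characters, $T_\chi(e)=\varphi(n)/\varphi(e)$ when $\chi$ is trivial on $K_e$ and $T_\chi(e)=0$ otherwise. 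Here the conductor enters: $\chi$ is trivial on $K_e$ exactly when $\chi$ factors through $\Bbb Z_e^\ast$, i.e. when $\chi$ is induced by a character modulo $e$, which by the very definition of the conductor $d$ happens if and only if $d\mid e$. I expect this identification of the vanishing condition with the divisibility $d\mid e$ to be the main obstacle, as it rests on the standard but nontrivial fact that $\chi$ descends to $\Bbb Z_e^\ast$ precisely for the moduli $e$ that are multiples of the conductor.

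Combining these observations, only the divisors $e$ of $n$ with $d\mid e$ survive, and the factors $\varphi(e)$ and $\varphi(n)/\varphi(e)$ cancel, leaving
\begin{equation*}
S_\chi(n,k)=\varphi(n)\sum_{\substack{e\mid n\\ d\mid e}}\Bigl(\frac{n}{e}\Bigr)^k.
\end{equation*}
Finally I would reparametrize via $e=dt$ with $t\mid n/d$, so that $n/e=(n/d)/t$ ranges over all divisors of $n/d$ as $t$ does; the sum then collapses to $\sum_{s\mid n/d}s^k=\sigma_k(n/d)$, which yields $S_\chi(n,k)=\varphi(n)\sigma_k(n/d)$ as asserted. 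As consistency checks, taking $\chi$ trivial (so $d=1$) recovers Sury's identity \eqref{oldbegin2}, while taking $k=0$ recovers Zhao and Cao's identity \eqref{char1}.
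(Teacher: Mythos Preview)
Your argument is correct and takes a genuinely different route from the paper. The paper proceeds in two stages: it first establishes the identity when $n=p^m$ is a prime power (Theorem~\ref{thhh3}), via an explicit computation that stratifies $\Bbb Z_n^\ast$ by the filtration $U_i=1+p^i\Bbb Z_n$ and evaluates the partial sums $\sum_{a\in U_i}\chi(a)$ (Lemma~\ref{thm2}) together with a count of tuples with prescribed $\gcd$ (Lemma~\ref{thm3}); it then deduces the general case by showing $S_\chi(n,k)$ is multiplicative in $n$ through the Chinese remainder theorem (Lemma~\ref{lem3}). You bypass both the prime-power reduction and the CRT step by linearizing the $\gcd$ with Gauss's identity $m=\sum_{e\mid m}\varphi(e)$ and then evaluating the resulting character sum over the kernel $K_e$ of $\Bbb Z_n^\ast\to\Bbb Z_e^\ast$ in one stroke using orthogonality and the characterization of the conductor. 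Your approach is shorter and conceptually cleaner, isolating the role of the conductor in a single global statement (namely that $\chi$ is trivial on $K_e$ iff $d\mid e$); the paper's approach is more hands-on and self-contained, needing only the behaviour of $\chi$ on the $p$-filtration rather than the general fact about induced moduli, and it mirrors the structure of the antecedent papers \cite{LK}, \cite{Z-C}.
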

\begin{rmk}
 If $\chi$ is the trivial character, then %our identity 
 \eqref{begin2} reduces to Sury's identity \eqref{oldbegin2}.
  Further, if $k=0$, then our identity 
  \eqref{begin2} reduces to Zhao and Cao's identity \eqref{char1}.
\end{rmk}
The rest of paper is organized as follows. In section 2, we prove Theorem \ref{thm1} in the special  case of $n$ being a prime power. 
The general case is treated in section 3 by combining the prime power cases with the Chinese remainder theorem.

\section{Prime power case}
In this section, we  assume $n=p^m$, where $p$ is a prime number and $m$ is a positive integer.
 Let $\chi$ be a Dirichlet character modulo $n$ with conductor $d$. 
 Since $d\mid n$, we denote $d=p^t$, where $0\leq t\leq m$.\

Our proof of Theorem \ref{thm1} in this case can be viewed as a combination of techniques of \cite{LK}  and \cite{Z-C}. \

Similarly as \cite{LK} (see p.46), 
we shall introduce filtrations for the additive group $\Bbb Z_n$ and 
the multiplicative group $\Bbb Z_n^\ast$, respectively.
 Since $n=p^m$ is a prime power,
 the whole subgroups of $\Bbb Z_n$ form a chain:
\begin{equation}\nonumber
0=p^m\Bbb Z_n\subset p^{m-1}\Bbb Z_n\subset ...\subset p\Bbb Z_n\subset \Bbb Z_n .
\end{equation}
The multiplicative group $\Bbb Z_n^\ast$ also has a filtration consisting of multiplicative subgroups:
\begin{equation}\nonumber
1=1+p^m\Bbb Z_n\subset 1+p^{m-1}\Bbb Z_n\subset ...\subset 1+p\Bbb Z_n\subset \Bbb Z_n^\ast.
\end{equation}
For simplicity of the proof, we introduce the following notations.
\begin{align*}
R_j&=p^j\Bbb Z_n \ \ {\rm  with} \ \ 0\leq j\leq m\ \  {\rm  and}\ \  R_{m+1}=\varnothing,\\
U_0&=\Bbb Z_n^\ast, U_i=1+p^i\Bbb Z_n\ \ {\rm  with} \ \ 1\leq i\leq m\ \  {\rm  and}\ \  U_{m+1}=\varnothing,\\
S_j&=R_j-R_{j+1} \ \ {\rm  with} \ \  0\leq j\leq m,\\
V_i&=U_i-U_{i+1} \ \ {\rm  with} \ \  0\leq i\leq m.
\end{align*}
Clearly, $\Bbb Z_n=\bigcup \limits_{j=0}^m S_j$ and $\Bbb Z_n^\ast=\bigcup \limits_{i=0}^m V_i$ with disjoint union.
 Also, we have
\begin{align}\nonumber
\# U_0&=p^m-p^{m-1}, \   \# U_{m+1}=0 \   {\rm  and}\\
\# U_i&=p^{m-i} \ \ {\rm  with}\ \  1\leq i\leq m,\nonumber
\end{align}
where \# denotes the cardinality of sets.\

% Now, we begin to compute $S_{\chi}(p^m,k)$.
Consider
\begin{align}\label{LiAdd3}
S_{\chi}(p^m,k)&=\sum_{\substack{a\in\Bbb Z_n^\ast \\ b_1, ..., b_k\in\Bbb Z_n}}
 \gcd(a-1,b_1, ..., b_k, n)\chi (a)\\
&=\sum_{s=0}^m \sum_{\substack{\gcd (b_1, ..., b_k, n)=p^s \\ b_1, ..., b_k\in\Bbb Z_n}} \sum_{a\in \Bbb Z_n^\ast} \gcd (a-1, p^s) \chi (a)\nonumber\\
&=\sum_{s=0}^m\left(\sum_{a\in \Bbb Z_n^\ast} \gcd (a-1, p^s) \chi (a)\right) \left(\sum_{\substack{\gcd (b_1, ..., b_k, n)=p^s \\ b_1, ..., b_k\in\Bbb Z_n}}1\right).\nonumber
\end{align}
Therefore, we need to compute
$$\sum_{a\in \Bbb Z_n^\ast} \gcd (a-1, p^s) \chi (a)\ \  {\rm  and}\  \sum_{\substack{\gcd (b_1, ..., b_k, n)=p^s \\ b_1, ..., b_k\in\Bbb Z_n}}1$$
explicitly. These will be done in Lemma \ref{2.2} and Lemma \ref{thm3}, respectively.\

Next, we will state a lemma, which is key to the proof of Lemma \ref{2.2}.
\begin{lem}\label{thm2}
Let $n=p^m$ and $\chi$ be a Dirichlet character modulo $n$ with conductor $p^t$, where $0\leq t\leq m$. Then, for $0\leq i\leq m$, we have
\begin{equation}\nonumber
\sum_{a\in U_i} \chi(a)=\left\{
						\begin{aligned}
						&\# U_i, \ \ if\ \  i=t, t+1,..., m.\\
						&0,\ \  otherwise.
						\end{aligned}
				    \right.
\end{equation}
\end{lem}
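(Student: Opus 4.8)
The plan is to read each sum $\sum_{a\in U_i}\chi(a)$ as a character sum over the finite abelian group $U_i$ and to apply the orthogonality relations. The essential input is a translation of the conductor hypothesis into a statement about the triviality of $\chi$ restricted to the nested subgroups $U_i$, after which the two cases fall out immediately.

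First I would recall the group-theoretic meaning of the conductor. For $1\leq i\leq m$ the subgroup $U_i=1+p^i\Bbb Z_n$ is exactly the kernel of the reduction homomorphism $\Bbb Z_n^\ast\to(\Bbb Z/p^i\Bbb Z)^\ast$ (and $U_0=\Bbb Z_n^\ast$ corresponds to reduction modulo $p^0=1$). Hence $\chi$ descends to a character modulo $p^i$ precisely when $\chi$ is trivial on $U_i$. Since the conductor $p^t$ is by definition the smallest modulus through which $\chi$ factors, this yields the key dichotomy: $\chi|_{U_i}$ is the trivial character of $U_i$ if and only if $i\geq t$, and $\chi|_{U_i}$ is nontrivial whenever $i<t$. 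With the chain $U_m\subset\cdots\subset U_0$ in mind, one can also see the second half directly, since for $i<t$ we have $U_{t-1}\subseteq U_i$ and $\chi|_{U_{t-1}}$ is nontrivial, so $\chi|_{U_i}$ cannot be trivial either.

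Given this, both cases follow from the standard orthogonality relation for a character $\psi$ of a finite abelian group $G$, namely $\sum_{g\in G}\psi(g)=\#G$ when $\psi$ is trivial and $0$ otherwise. For $t\leq i\leq m$ the character $\chi$ restricts to the trivial character on $U_i$, so $\sum_{a\in U_i}\chi(a)=\#U_i$; for $0\leq i<t$ the restriction is nontrivial, so the sum vanishes. (When $t=0$ the character $\chi$ is trivial and only the first case occurs, consistent with the claimed formula.) The only genuine obstacle here is conceptual rather than computational: one must correctly identify $U_i$ as the kernel of reduction modulo $p^i$ and confirm that "conductor $p^t$" is exactly equivalent to $\chi$ being trivial on $U_t$ but not on $U_{t-1}$. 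Once that identification is secured, no calculation of the individual sums is needed, and the nesting of the $U_i$ together with orthogonality completes the argument.
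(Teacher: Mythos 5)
Your proposal is correct and follows essentially the same route as the paper: translate the conductor condition into triviality of $\chi$ on $U_i$ exactly for $i\geq t$ (using the nesting $U_m\subset\cdots\subset U_0$), then apply orthogonality of characters on the finite abelian group $U_i$ to each case. Your explicit identification of $U_i$ as the kernel of reduction modulo $p^i$ just makes precise a step the paper leaves implicit; there is no substantive difference.
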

\begin{proof}
By the definition of conductor, $p^t$ is the smallest integer such that $\chi$ factors through $\Bbb Z_{p^t}^\ast$. 
Therefore, $\chi$ is trivial on $U_t$, 
but nontrivial on $U_{t-1}$ if $t\geq 1$. Since $U_m \subset ... \subset U_1 \subset U_0 $ forms a filtration of $\Bbb Z_n^\ast$, $\chi$ is trivial on $U_i$ for $t\leq i\leq m$ and nontrivial on other $U_i$. 
Hence,
$$\sum_{a\in U_i} \chi (a)=\# U_i, \ {\rm for}\ \  i=t,t+1,.., m.$$
For $i=0,..., t-1,$ the restriction of $\chi$ on $U_i$ is a nontrivial character for the multiplicative group $U_i$.
 By the orthogonality of characters, we have
$$\sum_{a\in U_i} \chi (a)=0, \ {\rm for}\ \  i=0,1,...,t-1.$$
\end{proof}
\begin{lem}\label{2.2}
Let $n=p^m$ and $\chi$ be a Dirichlet character modulo $n$ with conductor $p^t$, where $0\leq t\leq m$. 
Let $s$ be an integer such that $0\leq s\leq m$. Then,
\begin{equation}\label{LiAdd2}
\sum_{a\in \Bbb Z_n^\ast} \gcd(a-1,p^s) \chi(a)=\left\{
						\begin{aligned}
						(s-t+1)&(p^m-p^{m-1}), \  &if \ s\geq t,\\
						&0,\ \ \ \ \  &otherwise.
						\end{aligned}
				    \right.
\end{equation}
\end{lem}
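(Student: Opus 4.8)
The plan is to evaluate the sum $\sum_{a\in\Bbb Z_n^\ast}\gcd(a-1,p^s)\chi(a)$ by partitioning the units according to the $p$-adic valuation of $a-1$, that is, according to which set $V_i = U_i - U_{i+1}$ the element $a$ falls into. The key observation is that for $a\in V_i$, we have $a-1\in p^i\Bbb Z_n$ but $a-1\notin p^{i+1}\Bbb Z_n$, so $\gcd(a-1,p^s)=p^{\min(i,s)}$. This is a constant on each piece $V_i$, which lets me factor the gcd out of the inner sum and reduce everything to character sums over the sets $V_i$.

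First I would rewrite the target sum as $\sum_{i=0}^{m} p^{\min(i,s)}\sum_{a\in V_i}\chi(a)$. To handle the character sums over the differences $V_i = U_i\setminus U_{i+1}$, I would express them via the nested sums from Lemma \ref{thm2} as $\sum_{a\in V_i}\chi(a)=\sum_{a\in U_i}\chi(a)-\sum_{a\in U_{i+1}}\chi(a)$. By Lemma \ref{thm2}, this difference is $\#U_i - \#U_{i+1}$ when both $i\geq t$ and $i+1\geq t$ (i.e. $i\geq t$), equals $\#U_i - 0$ in the boundary case handled by $U_{m+1}=\varnothing$, and is $0-0=0$ when $i<t-1$; the only subtle index is $i=t-1$, where the sum over $U_{t-1}$ vanishes while the sum over $U_t$ equals $\#U_t$, giving $-\#U_t$.

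Next I would assemble the pieces. When $s<t$, every surviving term has $i\geq t-1>s$... more carefully, collecting the contributions I expect the nonzero terms to telescope and cancel, yielding $0$, which matches the stated ``otherwise'' case; I would verify that the $i=t-1$ term with value $-p^s\#U_t$ exactly cancels the $i=t$ contribution when $s<t$. When $s\geq t$, I would compute $\sum_{i=t}^{m} p^{\min(i,s)}(\#U_i-\#U_{i+1})$ together with the boundary term at $i=t-1$, and substitute the explicit values $\#U_i=p^{m-i}$ for $1\leq i\leq m$ and $\#U_0=p^m-p^{m-1}$. I anticipate that the weights $p^{\min(i,s)}$ combine with the cardinality differences so that the sum collapses to exactly $(s-t+1)(p^m-p^{m-1})$, the count being the $s-t+1$ indices from $t$ through $s$ each contributing $p^m-p^{m-1}$.

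The main obstacle will be the careful bookkeeping of the boundary terms and the split of $\min(i,s)$ into the two regimes $i\leq s$ and $i>s$. In particular, for indices $i$ with $i>s$ the weight is the constant $p^s$, so those terms telescope cleanly through the cardinality differences; for $i\leq s$ the weight is $p^i$, and I must check that the algebra $\sum p^i(\#U_i-\#U_{i+1})$ produces one full copy of $p^m-p^{m-1}$ per index rather than a geometric remainder. Getting the edge case $i=t-1$ right — where Lemma \ref{thm2} forces a sign flip because $\chi$ is nontrivial on $U_{t-1}$ but trivial on $U_t$ — is where I expect the proof to require the most attention, since that single term is precisely what enforces the vanishing when $s<t$ and corrects the count when $s\geq t$.
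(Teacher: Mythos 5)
Your proposal is correct and follows essentially the same route as the paper: both partition $\Bbb Z_n^\ast$ into the sets $V_i$ where $\gcd(a-1,p^s)=p^{\min(i,s)}$ is constant, and both reduce to Lemma \ref{thm2} via $\sum_{a\in V_i}\chi(a)=\sum_{a\in U_i}\chi(a)-\sum_{a\in U_{i+1}}\chi(a)$. The only difference is bookkeeping order — the paper regroups into sums over the $U_i$ first (summation by parts, arriving at $\sum_{a\in U_0}\chi(a)+\sum_{i=1}^s(p^i-p^{i-1})\sum_{a\in U_i}\chi(a)$) and then applies Lemma \ref{thm2}, whereas you apply the lemma to each $V_i$ and telescope at the end; your boundary analysis at $i=t-1$ and the cancellations you anticipate all check out.
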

\begin{proof}
By definitions of $U_i$ and $V_i$, we know that $\Bbb Z_n^\ast =\bigcup_{i=0}^m V_i$ with disjoint union
 and $\gcd (a-1, p^m)=p^i$ if $a\in V_i$, where $0\leq i\leq m$.\
%\bigcup对齐？
Therefore, we have
\begin{align*}
&\sum_{a\in \Bbb Z_n^\ast} \gcd(a-1, p^s)\chi (a)\\
=&\sum_{i=0}^m \sum_{a\in V_i}\gcd(a-1, p^s)\chi (a)\\
=&\sum_{i=0}^{s-1} \sum_{a\in V_i} p^i \chi (a)+\sum_{i=s}^m \sum_{a\in V_i}p^s \chi (a)\\
=&\sum_{i=0}^{s-1}p^i\left(\ \sum_{a\in U_i}\chi (a)- \sum_{a\in U_{i+1}}\chi (a)\right)+p^s\sum_{a\in U_s}\chi(a).
\end{align*}
The last equality is due to that
$$V_i=U_i-U_{i+1}\  {\rm and}\  U_s=\bigcup^m_{i=s}V_i \  {\rm  with\ disjoint \ union.}$$
Changing the summation index, we get
\begin{align}\label{LiAdd1}
&\sum_{i=0}^{s}p^i\sum_{a\in U_i}\chi (a)-\sum_{i=1}^{s}p^{i-1}\sum_{a\in U_i}\chi (a)\nonumber\\
=&\sum_{a\in U_0}\chi(a)+\sum_{i=1}^s(p^i-p^{i-1})\sum_{a\in U_i}\chi(a).
\end{align}
In the following, we calculate the left hand side of \eqref{LiAdd2} case by case.
\begin{case}$t=0$\

In this case, $\chi$ is a trivial character of $\Bbb Z_n^\ast$.\

According to \eqref{LiAdd1},
\begin{align*}
\sum_{a\in \Bbb Z_n^\ast} \gcd(a-1,p^s) \chi(a)&
=\# U_0+\sum_{i=1}^s(p^i-p^{i-1})\# U_i\\
&=(p^m-p^{m-1})+\sum_{i=1}^s(p^i-p^{i-1})p^{m-i}\\
&=(s+1)(p^m-p^{m-1}).
\end{align*}
\end{case}

\begin{case}$s\geq t$ and $t\geq 1$.\\
By equation \eqref{LiAdd1} and Lemma \ref{thm2}, we have
\begin{align*}
\sum_{a\in \Bbb Z_n^\ast} \gcd(a-1,p^s) \chi(a)&=\sum_{i=t}^s(p^i-p^{i-1})\# U_i\\
&=\sum_{i=t}^s(p^i-p^{i-1})p^{m-i}\\
&=(s-t+1)(p^m-p^{m-1}).
\end{align*}
\end{case}
\begin{case}$s<t$

Note in this case, $t\geq 1$. According to equation \eqref{LiAdd1} and Lemma \ref{thm2}
\begin{align*}
\sum_{a\in \Bbb Z_n^\ast} \gcd(a-1,p^s) \chi(a)&=0+\sum_{i=1}^s(p^i-p^{i-1})*0=0.
\end{align*}
\end{case}
Case 1 and Case 2 can be unified, which leads to the final result.
\end{proof}
\begin{rmk}
In Lemma \ref{2.2}, if $s=m$, this is just Lemma 3.1 of \cite{Z-C}.
 In fact, for $s\geq t$, $\chi$ can also be viewed as a Dirichlet character modulo $p^s$ with conductor $p^t$.
  In this case,
$$\sum_{a\in \Bbb Z_{p^m}^\ast} \gcd (a-1, p^s) \chi (a)=p^{m-s}\sum_{a\in \Bbb Z_{p^s}^\ast} \gcd (a-1, p^s) \chi (a).$$
Therefore, Lemma \ref{2.2} can be deduced from Lemma 3.1 of \cite{Z-C} in case of $s\geq t$. 
However, here, we give a unified proof including all cases.
\end{rmk}
\begin{lem}\label{thm3}
Let $n=p^m$ be a prime power and $s\geq 0$ be an integer. 
Assume $k\geq 0$ is an integer.
 Then
\begin{equation}\nonumber
\sum_{\substack{b_1, ..., b_k\in \Bbb Z_n \\ \gcd(b_1, ..., b_k,p^m)=p^s}} 1=\left\{
						\begin{aligned}
						p^{(m-s)k}-&p^{(m-s-1)k}, \ &if\   s< m,\\
						&1,\  &if\  s=m.
						\end{aligned}
				    \right.
\end{equation}
\end{lem}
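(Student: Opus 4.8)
The plan is to count, for a fixed $s$ with $0\le s\le m$, the number of tuples $(b_1,\dots,b_k)\in\Bbb Z_n^k$ satisfying $\gcd(b_1,\dots,b_k,p^m)=p^s$. The key observation is that, since $n=p^m$ is a prime power, the only possible values of $\gcd(b_1,\dots,b_k,p^m)$ are the powers $p^0,p^1,\dots,p^m$, and that the condition $\gcd(b_1,\dots,b_k,p^m)=p^s$ is equivalent to two simpler divisibility conditions: that $p^s\mid b_i$ for every $i$, and that not all $b_i$ are divisible by $p^{s+1}$ (when $s<m$).

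First I would treat the case $s=m$ separately: here $\gcd(b_1,\dots,b_k,p^m)=p^m$ forces $p^m\mid b_i$ for all $i$, so each $b_i\equiv 0\pmod{p^m}$ in $\Bbb Z_n$, giving exactly the single tuple $(0,\dots,0)$. This yields the count $1$.

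For $s<m$, I would use inclusion–exclusion on the two nested conditions. The number of tuples with $p^s\mid b_i$ for all $i$ is $(p^{m-s})^k=p^{(m-s)k}$, since each $b_i$ ranges over the $p^{m-s}$ multiples of $p^s$ in $\Bbb Z_n$. Among these, the tuples failing the condition $\gcd=p^s$ are exactly those with $p^{s+1}\mid b_i$ for all $i$, of which there are $(p^{m-s-1})^k=p^{(m-s-1)k}$ by the same counting. Subtracting gives $p^{(m-s)k}-p^{(m-s-1)k}$, as claimed.

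The main obstacle, though minor, is simply to verify carefully the equivalence between the gcd condition and the two divisibility conditions, and to confirm the counts of multiples are correct in the ring $\Bbb Z_n$ rather than in $\Bbb Z$; the totally ordered lattice of subgroups $R_j=p^j\Bbb Z_n$ established at the start of the section makes this transparent, since $\gcd(b_1,\dots,b_k,p^m)\ge p^{s}$ is precisely the statement that every $b_i$ lies in $R_s$. Once this dictionary is in place, the computation is a direct counting argument with no further difficulty.
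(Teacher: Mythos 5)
Your proposal is correct and follows essentially the same route as the paper: the paper also observes that $p^s \mid \gcd(b_1,\dots,b_k,p^m)$ exactly when all $b_i \in p^s\Bbb Z_n$, identifies the tuples with gcd equal to $p^s$ (for $s<m$) as the set difference $(p^s\Bbb Z_n)^k \setminus (p^{s+1}\Bbb Z_n)^k$, and counts using $\#(p^s\Bbb Z_n)=p^{m-s}$, which is precisely your inclusion--exclusion step. The only cosmetic difference is that the paper dispatches the $k=0$ case explicitly before assuming $k\geq 1$, a point your write-up leaves implicit.
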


\begin{proof}
The case $k=0$ is obvious. Thus, we assume $k\geq 1.$\

Note that $p^s\mid \gcd(b_1, ..., b_k, p^m)$ if and only if $b_1, ..., b_k\in p^s \Bbb Z_n$ holds. 
Therefore, for $0\leq s\leq m-1$,
$$\gcd(b_1, ..., b_k, p^m)=p^s \text{ if and only if } (b_1,..., b_k)\in (p^s \Bbb Z_n)^k-(p^{s+1}\Bbb Z_n)^k.$$
Clearly, for $s=m$,
$$\gcd (b_1,..., b_k, p^m)=p^s \text{ if and only if } (b_1,..., b_k)\in (p^s\Bbb Z_n)^k.$$
Since $\#(p^s \Bbb Z_n)=p^{m-s}$ for $0\leq s\leq m$, we get the desired result.\
  \end{proof}

 Finally, we prove the following result, which is a special case of Theorem \ref{thm1}.
 \begin{thm}\label{thhh3}
 Let $n=p^m$ be a prime power and $\chi$ be a Dirichlet character whose conductor is $d=p^t$.
  Assume $k$ is a non-negative integer. Then, the following identity holds
 $$ \sum_{\substack{a\in\Bbb Z_n^\ast \\ b_1, ..., b_k\in\Bbb Z_n}}
 \gcd(a-1,b_1, ..., b_k, n)=\varphi(n)\sigma_k\left(\frac{n}{d}\right).
$$
 \end{thm}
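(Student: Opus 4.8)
The plan is to feed the two lemmas just established into the factorization of $S_\chi(p^m,k)$ recorded in \eqref{LiAdd3}. That identity already separates the sum over $a$ from the sum over $b_1,\dots,b_k$, writing
\[
S_\chi(p^m,k)=\sum_{s=0}^m\left(\sum_{a\in\Bbb Z_n^\ast}\gcd(a-1,p^s)\chi(a)\right)\left(\sum_{\substack{\gcd(b_1,\dots,b_k,n)=p^s\\ b_1,\dots,b_k\in\Bbb Z_n}}1\right),
\]
so the whole task reduces to substituting the closed forms of Lemma \ref{2.2} for the inner $a$-sum and of Lemma \ref{thm3} for the inner $b$-sum, and then evaluating the single resulting sum over $s$.

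Next I would use Lemma \ref{2.2} to discard every term with $s<t$, since there the $a$-sum vanishes, while for $s\ge t$ it equals $(s-t+1)(p^m-p^{m-1})$. The common factor $p^m-p^{m-1}=\varphi(p^m)$ can then be pulled out front. Writing the $b$-sum from Lemma \ref{thm3} as $p^{(m-s)k}-p^{(m-s-1)k}$ for $s<m$ and as $1$ for $s=m$, this leaves
\[
S_\chi(p^m,k)=\varphi(p^m)\left(\sum_{s=t}^{m-1}(s-t+1)\left(p^{(m-s)k}-p^{(m-s-1)k}\right)+(m-t+1)\right).
\]

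Finally — and this is the only real computation — I would show that the bracketed quantity equals $\sigma_k(p^{m-t})=\sum_{j=0}^{m-t}p^{jk}$, which yields exactly $\varphi(n)\sigma_k(n/d)$ because $n/d=p^{m-t}$. The clean route is a summation-by-parts (Abel) argument: abbreviate $M=m-t$ and $q=p^k$, reindex by $u=s-t$, and expand $\sum_{u=0}^{M-1}(u+1)\left(q^{M-u}-q^{M-u-1}\right)$ into two sums whose overlapping terms collapse. Each coefficient of $q^j$ for $1\le j\le M$ telescopes to $1$, the $q^0$ term contributes $-M$, and adding back the isolated $s=m$ contribution $M+1$ cancels that $-M$ and supplies the missing constant $1$, producing $1+q+\cdots+q^M=\sigma_k(p^M)$. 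I expect this telescoping bookkeeping to be the main obstacle, in the sense that it is the one place where a stray sign or index shift can go wrong; everything else is direct substitution of the two lemmas.
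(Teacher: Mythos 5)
Your proposal is correct and follows essentially the same route as the paper: both start from the factorization \eqref{LiAdd3}, substitute Lemma \ref{2.2} to kill the $s<t$ terms and pull out $\varphi(p^m)$, substitute Lemma \ref{thm3} for the $b$-sum, and then telescope the resulting sum $\sum_{s=t}^{m-1}(s-t+1)\bigl(p^{(m-s)k}-p^{(m-s-1)k}\bigr)+(m-t+1)$ down to $\sigma_k(p^{m-t})$. Your Abel-summation bookkeeping (with $M=m-t$, $q=p^k$) is just a reparametrization of the paper's index shifts, where the $s=m$ term is absorbed into the first sum before telescoping rather than added back at the end.
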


\begin{proof}
 By equation \eqref{LiAdd3}, we have
 \begin{equation}\label{LiAdd4}
S_\chi (p^m, k)=\sum_{s=0}^m\left(\sum_{a\in \Bbb Z_n^\ast} \gcd (a-1, p^s) 
\chi (a)\right)\left(\sum_{\substack{b_1, ..., b_k\in \Bbb Z_n \\ \gcd(b_1, ..., b_k,p^m)=p^s}}1\right).
\end{equation}
Substituting Lemma \ref{2.2} into \eqref{LiAdd4}, we get
\begin{equation}\label{LiAdd5}
S_\chi (p^m, k)=\sum_{s=t}^m(s-t+1)(p^m-p^{m-1})
\left(\sum_{\substack{b_1, ..., b_k\in \Bbb Z_n \\ \gcd(b_1, ..., b_k,p^m)=p^s}}1\right) .
\end{equation}
It follows from Lemma \ref{thm3} and \eqref{LiAdd5} that
\begin{align}
S_\chi (p^m, k)=&\varphi(p^m)\left(\sum_{s=t}^{m-1}(s-t+1)(p^{(m-s)k}-p^{(m-s-1)k})+(m-t+1)\right)\\
=&\varphi(p^m)\left(\sum_{s=t}^{m}(s-t+1)p^{(m-s)k}-\sum_{s=t}^{m-1}(s-t+1)p^{(m-s-1)k}\right)\nonumber\\
=&\varphi(p^m)\left(\sum_{s=t}^{m}(s-t+1)p^{(m-s)k}-\sum_{s=t+1}^{m}(s-t)p^{(m-s)k}\right) .\nonumber
\end{align}
The last equality is by substituting $s+1$ with $s$ in the posterior summation.
 Hence
\begin{align}
S_\chi (p^m, k)=&\varphi(p^m)\left(p^{(m-t)k}+\sum_{s=t+1}^{m}p^{(m-s)k}\right)\nonumber\\
=&\varphi(p^m)\left(p^{(m-t)k}+\sum_{s=0}^{m-t-1}p^{sk}\right) .\nonumber
\end{align}
The last equality can be derived from substituting $m-s$ with $s$.
 Therefore,
\begin{equation}
S_\chi (p^m, k)=\varphi(p^m)\sum_{s=0}^{m-t}p^{sk}=\varphi(p^m)\sigma_k\left(\frac{p^m}{p^t}\right)  \nonumber
\end{equation}
which concludes the proof.
\end{proof}

\section{The general case}
In this section, we will prove the main  theorem.
 First, we show $S_\chi(n,k)$ is multiplicative with respect to $n$,
  by the Chinese remainder theorem. Then, using multiplicative property, 
  we prove Theorem \ref{thm1} by combining prime power cases, which are treated in section 2.

Let $n=n_1 n_2$ be the product of positive integers $n_1$ and $n_2$ such that 
$\gcd(n_1,n_2)=1$. By the Chinese remainder theorem, we have the ring isomorphism: 
$\Bbb Z_n \simeq \Bbb Z_{n_1}\oplus \Bbb Z_{n_2}$, which induces the multiplicative group isomorphism: 
$\Bbb Z_n^\ast \simeq \Bbb Z_{n_1}^\ast\times \Bbb Z_{n_2}$. 
Therefore, each Dirichlet character modulo $n$ can be uniquely written as $\chi=\chi_1\cdot\chi_2$,
 where $\chi$, $\chi_1$ and $\chi_2$ are Dirichlet characters modulo
  $n$, $n_1$ and $n_2$, respectively. 
  Explicitly,
\begin{equation}\nonumber
\chi(c\  {\rm  mod}\  n)=\chi_1(c\ {\rm  mod}\  n_1)\cdot \chi_2(c\ {\rm  mod}\  n_2)
\end{equation}
for any integer $c$ such that $\gcd (c,n)=1.$\

To simplify notations, for $a\in \Bbb Z_n$, we let $a'\in \Bbb Z_{n_1}$ and $a''\in \Bbb Z_{n_2}$ denote the image of
 $a$ in $\Bbb Z_{n_1}$ and $\Bbb Z_{n_2}$, respectively, i.e. $a'\equiv a\mod n_1$ and $a''\equiv a\mod n_2$.
  Let $d, d_1$ and $d_2$ be the conductors of $\chi, \chi_1$ and $\chi_2$, respectively. 
  It is well known that $d=d_1 d_2$.

The following lemma shows that $S_\chi(n,k)$ is multiplicative.
\begin{lem}\label{lem3}
Notations as above, we have
$$S_\chi(n,k)=S_{\chi_1}(n_1,k)\cdot S_{\chi_2}(n_2,k).$$
\end{lem}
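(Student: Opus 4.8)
The plan is to use the Chinese remainder theorem to turn the single sum defining $S_\chi(n,k)$ into a product of two independent sums, one over data reduced modulo $n_1$ and one over data reduced modulo $n_2$. The essential input is that both the summand and the index set factor compatibly with the isomorphism $\Bbb Z_n\simeq\Bbb Z_{n_1}\oplus\Bbb Z_{n_2}$.

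First I would record the bijection on index sets. Under the CRT isomorphism, sending $a\mapsto(a',a'')$ and $b_j\mapsto(b_j',b_j'')$ gives a bijection between the tuples $(a,b_1,\ldots,b_k)$ with $a\in\Bbb Z_n^\ast$, $b_j\in\Bbb Z_n$ and the pairs of tuples $\bigl((a',b_1',\ldots,b_k'),(a'',b_1'',\ldots,b_k'')\bigr)$ with $a'\in\Bbb Z_{n_1}^\ast$, $b_j'\in\Bbb Z_{n_1}$ and $a''\in\Bbb Z_{n_2}^\ast$, $b_j''\in\Bbb Z_{n_2}$; here one uses that $a$ is a unit modulo $n$ precisely when $a'$ and $a''$ are units modulo $n_1$ and $n_2$ respectively.

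The key arithmetic fact is the factorization of the gcd,
$$\gcd(a-1,b_1,\ldots,b_k,n)=\gcd(a'-1,b_1',\ldots,b_k',n_1)\cdot\gcd(a''-1,b_1'',\ldots,b_k'',n_2).$$
This rests on the elementary identity that, for $\gcd(n_1,n_2)=1$ and any integer $x$, one has $\gcd(x,n_1n_2)=\gcd(x,n_1)\gcd(x,n_2)$; applying it with $x=\gcd(a-1,b_1,\ldots,b_k)$ yields the displayed factorization. One also checks that the reductions $a'-1\equiv a-1 \pmod{n_1}$ and $b_j'\equiv b_j\pmod{n_1}$ (and similarly modulo $n_2$) leave each partial gcd unchanged, so the right-hand side is well defined. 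Combining this with the character factorization $\chi(a)=\chi_1(a')\chi_2(a'')$ recorded above, the summand in $S_\chi(n,k)$ becomes a product of a factor depending only on the primed data and a factor depending only on the double-primed data; since the two index sets range independently by the bijection, the sum splits as a product, giving exactly $S_{\chi_1}(n_1,k)\cdot S_{\chi_2}(n_2,k)$.

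I expect the main obstacle to be the gcd factorization: one must verify carefully that $\gcd(a-1,b_1,\ldots,b_k,n)$, computed from integer lifts, is an honest divisor of $n$ independent of the chosen representatives, and that the coprimality $\gcd(n_1,n_2)=1$ is precisely what allows the single gcd with $n$ to split into the product of the two partial gcds. Everything else—the CRT bijection on index sets and the character factorization—is standard and has already been set up in the preceding paragraphs of the paper.
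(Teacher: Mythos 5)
Your proposal is correct and follows essentially the same route as the paper's own proof: factor the gcd using $\gcd(x,n_1n_2)=\gcd(x,n_1)\gcd(x,n_2)$ for coprime $n_1,n_2$, factor the character as $\chi(a)=\chi_1(a')\chi_2(a'')$, and split the sum via the CRT bijection on index tuples. Your added care about independence of integer lifts is a minor point the paper leaves implicit, but it does not change the argument.
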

\begin{proof}  First, we check that
\begin{align*}
&\sum_{\substack{a\in\Bbb Z_n^\ast \\ b_1, ..., b_k\in\Bbb Z_n}} \gcd(a-1,b_1, ..., b_k, n)\chi (a)\\
=&\sum_{\substack{a\in\Bbb Z_n^\ast \\ b_1, ..., b_k\in\Bbb Z_n}} \gcd(a-1,b_1, ..., b_k, n_1) \gcd(a-1,b_1, ..., b_k, n_2)\chi_1(a)\chi_2(a) \\
=&\sum_{\substack{a'\in\Bbb Z_{n_1}^\ast \\ b'_1, ..., b'_k\in\Bbb Z_{n_1}}}\gcd(a'-1,b'_1, ..., b'_k, n_1)\chi_1(a')\\\times &\sum_{\substack{a''\in\Bbb Z_{n_2}^\ast \\ b''_1, ..., b''_k\in\Bbb Z_{n_2}}}\gcd(a''-1,b''_1, ..., b''_k, n_2)\chi_2(a'').
\end{align*}
The last equality is by Chinese remainder theorem. 
Indeed, as $(a, b_1,...,b_k)$ runs over $\Bbb Z_n^\ast\times(\Bbb Z_{n})^k, (a', b'_1,..., b'_k, a'',b''_1,...,b''_k)$ runs over $\Bbb Z_{n_1}^\ast\times(\Bbb Z_{n_1})^k\times\Bbb Z_{n_2}^\ast\times(\Bbb Z_{n_2})^k$, too. 
Therefore, we have
$$S_\chi(n,k)=S_{\chi_1}(n_1,k)\cdot S_{\chi_2}(n_2,k).$$
\end{proof}
\begin{rmk}
The proof of Lemma \ref{lem3} is similar to that of Lemma 2.1 in \cite{LK}. 
Also see the proof of Theorem \ref{thm1} and Theorem 1.2 in \cite{Z-C}.
\end{rmk}
{\bf{Proof of Theorem \ref{thm1}} :}
Let $n=p_1^{m_1}p_2^{m_2}\cdots p_u^{m_u}$ be the prime factorization of $n$. 
Then, $\chi$ can be uniquely written as $\chi= \chi_1\chi_2\cdots\chi_u$, where $\chi_i$ is a Dirichlet character modulo $p_i^{m_i}$ with conductor $d_i$, for $1\leq i\leq u$.
 It is well known that $d=d_1d_2\cdots d_u$. Denote $n_i=p_i^{m_i}.$

Finally, Theorem \ref{thhh3}  and  Lemma \ref{lem3} yield
$$S_\chi(n,k)=\prod_{i=1}^u S_{\chi_i}(n_i,k)=\prod_{i=1}^u\varphi(n_i)\sigma_k\left(\frac{n_i}{d_i}\right).$$
Since the arithmetic functions $\varphi$ and $\sigma_k$ are multiplicative, we have
$$S_\chi(n,k)=\varphi(n)\sigma_k\left(\frac{n}{d}\right).  \quad\quad\quad \qed$$
%\textbf{Acknowledgement:}
%We are grateful to the anonymous referee, who carefully read the paper and gave many valuable suggestions, which made the paper more elegant and readable. The corresponding author is  supported by the National Institute for Mathematical Sciences (NIMS) grant funded by the Korean government(A21200000).

\end{document}